\newtheorem {thm}   {Theorem}
\newtheorem* {thm*}   {Theorem}
\newtheorem* {prp*}   {Proposition}
\newtheorem {que}      [thm]    {Question}
\newtheorem {prp}[thm]  {Proposition}
\newtheorem {rmk} [thm]    {Remark}
\newcounter{AbcT}
\theoremstyle{definition}
\renewcommand{\d}{\delta}
\renewcommand{\l}{\lambda}
\newcommand{\R}{{\bf R}}
\newcommand{\Q}{{\bf Q}}
\newcommand{\Z}{{\bf Z}}
\newcommand {\cP} {{\mathcal P}}
\newcommand {\cQ} {{\mathcal Q}}
\newcommand{\wt}{\widetilde}
\title[Bernoulli convolutions]%
{On the dimension of Bernoulli convolutions for all transcendental parameters}
\author{P\'eter P. Varj\'u}
\thanks{
I gratefully acknowledge support
from the Royal Society
and support from ERC Grant no. 803711 `EFMA'}
\keywords{Bernoulli convolution, self-similar measure, dimension of measures, Mahler measure, entropy}
\subjclass[2010]{28A80, 42A85}
\dedicatory{Dedicated to the memory of Jean Bourgain}
\begin{document}

\begin{abstract}
The Bernoulli convolution $\nu_\lambda$ with parameter $\lambda\in(0,1)$ is the probability measure
supported on $\R$ that is the law of the random variable $\sum\pm\l^n$, where the $\pm$ are independent
fair coin-tosses.
We prove that $\dim\nu_\lambda=1$ for all transcendental $\lambda\in(1/2,1)$.
\end{abstract}

\maketitle

Fix a number $\l\in(0,1)$ and let $X_0,X_1,\ldots$ be a sequence of independent random variables, which
take the values $\pm1$ with equal probability.
The Bernoulli convolution $\nu_\lambda$ with parameter $\lambda$ is the probability measure on $\R$
that is the law of the random variable of
\[
\sum_{n=0}^{\infty}X_n\lambda^n.
\]

Bernoulli convolutions are one of the most studied examples of self-similar measures, and they are objects
of great interest in fractal geometry.
The following two basic questions about them are still open.
\begin{que}\label{qu:ac}
For which values of $\lambda$ is $\nu_\l$ absolutely continuous with respect to the Lebesgue measure?
\end{que}
\begin{que}\label{qu:dim}
For which values of $\lambda$ does $\dim \nu_\l=1$ hold?
\end{que}

The latter question requires some explanation.
Feng and Hu \cite{feng-hu} proved that Bernoulli convolutions are exact dimensional, that is to say, there is number,
which we denote by $\dim\nu_\l$, such that
\[
\lim_{r\to 0} \frac{\log\nu_\l([x-r,x+r])}{\log r}=\dim\nu_\l
\]
for $\nu_\l$-almost every $x$.
It is worth noting that $\dim\nu_\l$ is also the infimum of the Hausdorff dimensions of all Borel subsets of $\R$
that have positive $\nu_\l$-measure, see \cite{Fal-techniques}*{Proposition 10.2} for a proof of this fact, which
holds for all exact dimensional measures.

For $\l<1/2$, the Bernoulli convolution $\nu_\l$ is the Cantor-Lebesgue measure on a Cantor set, it is singular
and has dimension $\log 2/\log\l^{-1}$.
For $\l>1/2$ the above questions are still not completely understood.
The study of Question \ref{qu:ac} goes back to Erd\H os \cite{erdos39}, who exhibited the first and only known examples
of parameters $\lambda\in(1/2,1)$ that make the Bernoulli convolution singular.
These are the inverses of Pisot numbers.
Erd\H os \cite{erdos} also proved that there is a number $a<1$ such that $\nu_\l$ is absolutely continuous for almost all
$\l\in(a,1)$.
Solomyak proved in the landmark paper \cite{Sol-Bernoulli} that one can take $a=1/2$ in the above statement.

In this paper, we make progress on the second question.
The above mentioned result of Solomyak implies that the set
\[
E:=\{\lambda\in(1/2,1):\dim\nu_\l<1\}
\]
is of $0$ Lebesgue measure.
Hochman proved in another landmark paper \cite{hochman} that the set $E$ is of $0$ packing and Hausdorff dimension.
We take this a step further and show that $E$ is countable.

\begin{thm}\label{th:main}
We have $\dim\nu_\l=1$ for all transcendental $\l\in(1/2,1)$.
\end{thm}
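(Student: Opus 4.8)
The plan is to argue by contradiction, using Hochman's work as the entry point. Suppose $\lambda\in(1/2,1)$ is transcendental with $\dim\nu_\lambda<1$. Since the excerpt records that the exceptional set has zero dimension, Hochman's inverse theorem is available, and it forces a super-exponential failure of separation: for every $c>0$ there are infinitely many $n$ and a polynomial $P_n$ with coefficients in $\{-1,0,1\}$ and $\deg P_n\le n$ such that $0<|P_n(\lambda)|<e^{-cn}$. Transcendence guarantees $P_n(\lambda)\ne0$, so each $P_n$ has a genuine root $\alpha_n$ with $|\lambda-\alpha_n|$ super-exponentially small, and $\alpha_n$ is algebraic with Mahler measure $M(\alpha_n)\le M(P_n)\le(n+1)^{1/2}$ by Landau's inequality. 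The aim is to convert this family of algebraic numbers hugging $\lambda$ into a contradiction.

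First I would pin down the arithmetic of the approximants. A resultant (Liouville-type) estimate shows that two distinct algebraic numbers of degree $\le D$ and Mahler measure $\le(n+1)^{1/2}$ are separated by at least a fixed negative power of $n$; since the $\alpha_n$ are distinct and all super-exponentially close to $\lambda$, their degrees must tend to infinity. Moreover, because $\lambda\in(1/2,1)$ while the roots of every cyclotomic factor of $P_n$ lie on the unit circle, $\alpha_n$ is a root of the non-cyclotomic part of $P_n$, so its minimal polynomial is non-trivial and $M(\alpha_n)>1$. This is the precise sense in which transcendence manufactures an infinite family of honest algebraic approximations of growing complexity.

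The analytic heart of the argument is a \emph{transfer principle}: the finite-scale entropy of $\nu_\lambda$ should agree with that of $\nu_{\alpha_n}$ down to scales far finer than $|\lambda-\alpha_n|$. The mechanism is that the atoms of the level-$N$ measure sit at $\sum_{k<N}\epsilon_k\lambda^k$, whose $\lambda$-derivative is bounded by a polynomial in $N$; hence replacing $\lambda$ by $\alpha_n$ moves every atom by $O(N|\lambda-\alpha_n|)$, which is negligible at the scale $\alpha_n^N$ for all $N$ up to some $N(n)\to\infty$. This would let me equate the normalized scale-$\alpha_n^N$ entropies of $\nu_\lambda$ and $\nu_{\alpha_n}$ up to $N(n)$. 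For the algebraic parameter $\alpha_n$, Garsia's separation lemma gives exponential separation at rate $M(\alpha_n)^{-1}$, so Hochman's theorem applies and $\dim\nu_{\alpha_n}$ is governed by the Garsia entropy; the Breuillard--Varj\'u lower bound on the Garsia entropy in terms of the Mahler measure would then feed back a quantitative lower bound on the finite-scale entropy of $\nu_\lambda$.

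The main obstacle is exactly the regime where $M(\alpha_n)$ is close to $1$. A transcendental $\lambda$ could a priori be super-exponentially shadowed by inverse-Pisot-like numbers of small Mahler measure and genuinely small dimension, in which case the crude bound $h_{\alpha_n}\ge\min(\log 2,\,c\log M(\alpha_n))$ degenerates and no contradiction is visible from a single approximant. Overcoming this is where the real work lies, and I expect it to require a more robust, averaged estimate: rather than comparing $\nu_\lambda$ to one $\alpha_n$, one should exploit that the approximants vary with $n$ and combine the transfer principle with a quantitative, finite-scale form of the transversality phenomenon behind Solomyak's theorem, which supplies an entropy lower bound for $\nu_\eta$ for typical $\eta$ in a short interval around $\lambda$. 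The delicate point is to make this finite-scale entropy increase strong enough to defeat super-exponential approximation while remaining consistent with the genuine low-dimensional examples at algebraic parameters, so that transcendence, entering only through Hochman's super-exponential conclusion and the forced growth of the approximants' degrees, is what tips the balance.
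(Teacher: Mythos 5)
Your proposal has a genuine gap --- two, in fact, and you flag the second one yourself. First, Hochman's theorem (Theorem \ref{th:Hochman1}) gives, for each fixed $\theta\in(0,1)$, polynomials with $|P(\l)|<\theta^n$ for all large $n$: this is exponential decay with an arbitrarily good rate, \emph{not} super-exponential decay. Consequently the nearest root $\alpha_n$ of $P_n$ satisfies only $|\l-\alpha_n|\le\left(|P_n(\l)|/|a_n|\right)^{1/n}<\theta$, a constant, so the claimed family of algebraic numbers super-exponentially close to $\l$ never materializes from this construction. Second, and more fundamentally, even granting such approximants, your argument needs two properties that roots of Hochman's polynomials do not come with: a uniform (in $n$) bound on their Mahler measure, and control of $\dim\nu_{\alpha_n}$. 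Landau's inequality gives only $M(\alpha_n)\le\sqrt{n+1}$, which grows, and the ``transfer principle'' equating finite-scale entropies of $\nu_\l$ and $\nu_{\alpha_n}$ is precisely the hard analytic content you would have to prove, not a mechanism you may assume; you correctly observe that in the regime $M(\alpha_n)$ close to $1$ the entropy bounds degenerate, and the ``averaged estimate'' you gesture at is not supplied.

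The paper closes exactly this gap with an input your proposal never uses: the Breuillard--Varj\'u theorem (Theorem \ref{th:BV2}), which provides, along a sparse sequence of degrees $d$, algebraic approximants $\xi$ with $|\l-\xi|\le\exp(-d^2)$ (genuinely super-exponential) \emph{and with} $\dim\nu_\xi<1$. The dimension drop of the approximant is the key: by Hochman's formula for algebraic parameters (Theorem \ref{th:Hochman3}), $\dim\nu_\xi<1$ forces $h_\xi<\log\xi^{-1}<\log 2$, and then Theorem \ref{th:BV1} forces $M(\xi)\le M$ with $M$ depending only on $\l$, not on $d$ --- the uniform Mahler bound your approach lacks. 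With this in hand, the contradiction is obtained not by transferring entropy from $\l$ to $\xi$ but by playing the two approximation theorems against each other: at the scale $n$ defined by $(5M)^{-n}\le|\l-\xi|\le(5M)^{-n+1}$, which satisfies $n\ge d^2/\log(5M)\gg d\log d$ precisely because the approximation is super-exponential, Theorem \ref{th:Hochman1} (valid at \emph{every} sufficiently large scale) produces $P$ with $|P(\l)|<(20M)^{-n}$, while Proposition \ref{pr:separation} --- Garsia's lemma (Theorem \ref{th:Garsia}) combined with Solomyak's transversality (Theorem \ref{th:Solomyak}), which rules out $P$ being that small at $\l$ whether or not $P(\xi)=0$ --- shows this is impossible. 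Transversality is also what dictates the paper's preliminary reduction to $\l\in(1/2,2^{-1/2})$ via $\dim\nu_{\l^k}\le\dim\nu_\l$, and the trimmed convolutions $\wt\nu_\l$ handle the range $\xi>2^{-2/3}$; these steps are absent from your sketch but are needed to make the transversality input legitimate.
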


The dimension of Bernoulli convolutions for algebraic parameters is not fully understood.
The only known examples of parameters $\l\in(1/2,1)$ with $\dim(\nu_\l)<1$ are the inverses of Pisot numbers;
this fact can be traced back to Garsia \cite{Garsia-entropy} in some form.
Hochman \cite{hochman} expressed the dimension for algebraic parameters in terms of the so-called Garsia entropy of $\l$,
a quantity that have been studied recently in \cites{BV-entropy,AFKP-Bernoulli}.
We will briefly recall these results in Section \ref{sc:BV1}.

There is a folklore conjecture predicting that the dimension of self-similar measures equal their similarity dimension unless
exact overlaps occur.
See \cite{hochman}*{Sections 1.1 and 1.2}, where this conjecture is discussed.
(See also \cite{PS-problems}*{Question 2.6} for a closely related question about self-similar sets due to Simon.)
Theorem \ref{th:main} together with the results of Hochman \cite{hochman}*{Theorem 1.5} for algebraic parameters
establish this conjecture for Bernoulli convolutions.

It is not our aim to give a thorough discussion of the rich literature on Bernoulli convolutions.
Instead, we refer the interested readers to the surveys \cites{60y,Sol-survey,Var-ECM,Gou-Bernoulli-survey}.
The reader interested in Question \ref{qu:ac} is also recommended to consult the recent papers
\cites{shmerkin,Shm-Bernoulli-Lq,Var-Bernoulli-algebraic}.

Hochman \cite{hochman} proved Theorem \ref{th:main} conditionally on the hypothesis that there is a number $C>0$ such that
any two algebraic numbers $\xi_1$ and $\xi_2$ that are roots of (not necessarily the same) polynomials of degree
at most $n$ with coefficients $-1$, $0$ and $1$ satisfy the separation condition $|\xi_1-\xi_2|>C^{-n}$.
This hypothesis is very plausible, because there are at most $n3^{n+1}$ such algebraic numbers,
however, we do not know how to prove it.

On the other hand, we are able to prove the following weaker property.
Let $\xi\in(1/2,1/2^{2/3})$ be an algebraic number of degree at most $d$ with Mahler measure at most $M$
(see Section \ref{sc:Garsia} for the definition).
Then there are numbers $C_1,C_2,C_3$ depending only on $M$ such that the following holds.
Let $n>C_1d\log d$ be an integer and let $P$ be a polynomial of degree at most $n$
with coefficients $-1$, $0$ and $1$.
Then $|P(\l)|>C_2^{-n}$ for all $\lambda$ that satisfy $C_3^{-n}\le|\lambda-\xi|\le C_3^{-n+1}$.

The proof of Theorem \ref{th:main} relies heavily on the work of several mathematicians.
The statement in the previous paragraph is proved using an estimate on the values polynomials with small coefficients
evaluated on algebraic numbers, which was first used in the context of Bernoulli convolutions by Garsia \cite{Garsia-arithmetic},
together with the transversality property of these polynomials proved by Solomyak \cite{Sol-Bernoulli},
(see also Peres and Solomyak \cite{PS-Bernoulli}).
The algebraic number $\xi$ in the statement is found using a characterization of parameters with $\dim\nu_\l<1$
by Breuillard and Varj\'u \cite{BV-transcendent}.
The Mahler measure of $\xi$ is estimated using another paper \cite{BV-entropy} of the same authors.
Then the conclusion of the above statement is plugged into a result of Hochman \cite{hochman} to prove $\dim\nu_\l=1$.
 
The paper is organized as follows.
In Section \ref{sc:prelim}, we recall some facts from the above mentioned five papers.
We then prove Theorem \ref{th:main} in Section \ref{sc:proof} in just a few strokes.

\subsection*{Acknowledgment}
I am grateful to Emmanuel Breuillard, Vesselin Dimitrov and Ariel Rapaport for inspiring discussions and for carefully reading the manuscript.
I am grateful to Boris Solomyak and Pablo Shmerkin for helpful comments, which improved the presentation of this paper.
I am also grateful to the anonymous referee for suggesting Remark \ref{rm:referee}
and for other helpful comments, which improved the presentation of the paper.

\section{Preliminaries}\label{sc:prelim}

We recall some facts from the literature in this section, which will be used in the proof of Theorem \ref{th:main}.

\subsection{Estimates for the values of polynomials}\label{sc:Garsia}

Let $\l$ be an algebraic number with minimal polynomial
\[
a_dx^d+\ldots+a_1x+a_0=a_d(x-\lambda_1)\cdots(x-\lambda_d).
\]
The Mahler measure of $\lambda$ is defined as
\[
M(\l)=|a_d|\prod_{j:|\l_j|>1}|\l_j|.
\]
For more on the basic properties of this quantity we refer to \cite{BG-heights}*{Chapter 1}.

The utility of the following simple lemma (or a variant of it, rather, see \cite{Garsia-arithmetic}*{Lemma 1.51})
in the study of Bernoulli convolutions was first pointed out by Garsia.

\begin{thm}\label{th:Garsia}
Let $P$ be a polynomial of degree at most $n$ with coefficients $-1$, $0$ and $1$
and let $\l$ be an algebraic number of degree $d$.
Suppose that $P(\l)\neq0$.
Then
\[
|P(\l)|\ge M(\l)^{-n}(n+1)^{1-d}.
\]
\end{thm}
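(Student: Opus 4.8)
The plan is to replace the single value $P(\l)$ by the product of $P$ over all Galois conjugates of $\l$, thereby converting the problem into a statement about a nonzero rational integer, whose absolute value is automatically at least $1$. Writing the minimal polynomial as $a_d(x-\l_1)\cdots(x-\l_d)$ with $\l=\l_1$ and $a_j\in\Z$, I would introduce the quantity
\[
N:=a_d^n\prod_{j=1}^d P(\l_j)
\]
and establish three things about it: that it is a rational integer, that it is nonzero, and hence that $|N|\ge 1$. The claimed lower bound on $|P(\l)|$ then drops out after bounding the extraneous factors $|P(\l_j)|$, $j\ge 2$, from above.

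For integrality, the cleanest route is the resultant identity $\mathrm{Res}(f,P)=a_d^{\deg P}\prod_{j=1}^d P(\l_j)$, where $f$ is the minimal polynomial; since the resultant of two integer polynomials is an integer (a determinant of the Sylvester matrix) and $a_d^{\,n-\deg P}$ is an integer, we get $N=a_d^{\,n-\deg P}\mathrm{Res}(f,P)\in\Z$. (Equivalently, $\prod_j P(\l_j)$ is a symmetric function of the $\l_j$ with integer coefficients, hence an integer polynomial in the elementary symmetric functions $a_{d-i}/a_d$, and each factor $P(\l_j)$ has degree at most $n$, so the single power $a_d^n$ clears all denominators.) For nonvanishing, I would argue by minimality: if $P(\l_j)=0$ for some $j$, then $f$, being the minimal polynomial of $\l_j$ as well as of $\l$, divides $P$, so writing $P=fQ$ gives $P(\l)=f(\l)Q(\l)=0$, contradicting the hypothesis. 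Thus every factor is nonzero, $N$ is a nonzero integer, and $|N|\ge 1$.

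It then remains to estimate the conjugate factors. Since the coefficients of $P$ are bounded by $1$ in absolute value, for each $j$ one has
\[
|P(\l_j)|\le\sum_{k=0}^{n}|\l_j|^k\le (n+1)\max(1,|\l_j|)^n,
\]
as each of the $n+1$ terms is at most $\max(1,|\l_j|)^n$. Multiplying over $j=2,\dots,d$ and using $\prod_{j=1}^d\max(1,|\l_j|)=\prod_{j:|\l_j|>1}|\l_j|=M(\l)/|a_d|$ gives
\[
|a_d|^n\prod_{j=2}^d|P(\l_j)|\le (n+1)^{d-1}M(\l)^n.
\]
Combining this with $|N|=|a_d|^n|P(\l)|\prod_{j=2}^d|P(\l_j)|\ge 1$ yields $|P(\l)|\ge M(\l)^{-n}(n+1)^{1-d}$, as required.

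None of the steps is deep, but the one demanding the most care is the integrality claim, namely pinning down the exact power of $a_d$ needed to clear denominators; invoking the resultant makes this transparent, and it is the identity I would lean on. The nonvanishing of every conjugate factor, via the irreducibility of $f$, is the other place where a short but genuine argument is needed rather than a mere estimate.
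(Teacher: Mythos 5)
Your proof is correct, and its skeleton coincides with the paper's: both form $N=a_d^n\prod_{j=1}^d P(\l_j)$, show $|N|\ge 1$, and then bound the conjugate factors by $|P(\l_j)|\le(n+1)\max(1,|\l_j|)^n$ to extract the stated lower bound. The genuine difference lies in how integrality of $N$ is established. The paper argues adelically: $N$ is Galois-invariant, hence rational, and for every finite place $v$ the ultrametric inequality together with Gauss's lemma (in the form $\prod_j\max(|\l_j|_v,1)=|a_d|_v^{-1}$) gives $|N|_v\le 1$, whence $N\in\Z$. You instead invoke the resultant identity $\operatorname{Res}(f,P)=a_d^{\deg P}\prod_j P(\l_j)$, so that $N=a_d^{\,n-\deg P}\operatorname{Res}(f,P)$ is an integer because the resultant is the determinant of an integer Sylvester matrix. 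Your route is more elementary (no valuation theory needed) and equally sharp here, since the exponent $n$ comes out the same either way; the paper's places argument is the one that generalizes more readily to Galois-invariant expressions that are not of resultant form. One further point in your favor: you explicitly verify $N\neq 0$ (if some $P(\l_j)=0$, then the minimal polynomial of $\l_j$, which equals that of $\l$ by irreducibility, would divide $P$, forcing $P(\l)=0$); the paper asserts $|N|\ge 1$ without comment, leaving this easy but necessary step implicit. The only loosely stated point in your write-up is the parenthetical symmetric-function alternative: that the single power $a_d^n$ (rather than $a_d^{nd}$) clears all denominators for a symmetric integer polynomial of degree at most $n$ in each variable is true but itself requires an argument, essentially the paper's Gauss's lemma computation; since the resultant is your primary justification, this aside does not affect correctness.
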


We adapt the proof from \cite{Garsia-arithmetic}*{Lemma 1.51}.

\begin{proof}
Let $a_d$ and $\l=\l_1,\ldots,\l_d$ be as above.
Then
\[
a_d^n\prod_{j=1}^d P(\l_j)\in\Z.
\]
Indeed, it is clearly in $\Q$, since it is invariant under all automorphisms of $\overline\Q$.
To show that it is an integer, it is enough to show that
\[
\Big|a_d^n\prod_{j=1}^d P(\l_j)\Big|_v\le 1
\]
for all finite places $v$ of $K(\l_1,\ldots,\l_d)$.\footnote{
Indeed, if a number $a\in\Q$ is not an integer, then the denominator of $a$ is divisible by a prime
$p$, hence $|a|_p>1$ and we also have $|a|_v>1$ for all places $v$ of $K(\l_1,\ldots,\l_d)$
that lie above $p$.
For basic properties of absolute values, we refer to \cite{BG-heights}*{Sections 1.2 and 1.3}.
}
Using that $|\cdot|_v$ is an ultrametric, we have
\[
\Big|a_d^n\prod_{j=1}^d P(\l_j)\Big|_v\le|a_d|_v^n\prod_{j=1}^d\max(|\l_j|_v^n,1).
\]
Applying Gauss's lemma (see e.g. \cite{BG-heights}*{Lemma 1.6.3}) for the product of polynomials $\prod_j(x-\l_j)$, we get
\[
\prod_{j=1}^d\max(|\l_j|_v,1)=|a_d|_v^{-1},
\]
which combined with our previous inequality gives the claim.

For each $j$, we can write
\[
|P(\l_j)|\le(n+1)\max(1,|\l_j|)^n.
\]
Using
\[
\Big|a_d^n\prod_{j=1}^d P(\l_j)\Big|\ge1,
\]
we get
\begin{align*}
|P(\l_1)|\ge& a_d^{-n}\prod_{j=2}^d |P(\l_j)|^{-1}\\
\ge&(n+1)^{1-d}a_d^{-n}\prod_{j=2}^d \max(1,|\l_j|)^{-n}\\
\ge&(n+1)^{1-d}M(\l)^{-n}.
\end{align*}
\end{proof}

\subsection{Transversality}\label{sc:Solomyak}

It is clear that a polynomial with coefficients $-1$, $0$ and $1$ cannot have a root in the interval $(0,1/2)$.
It is natural to expect that there is a larger interval $(0,a)$ with $a>1/2$, where such a polynomial may
have at most one root.
This is indeed true and was established by Solomyak \cite{Sol-Bernoulli} building on ideas from \cite{PS-transversality}.
In fact, a slightly stronger property called transversality also holds, which we recall now.

Let $A\subset\Z_{>0}$.
We write $\cP_A$ for the set of power series of the form
\[
1+\sum_{n\in A}a_nx^n,
\]
where $a_n\in\{-1,0,1\}$.
Let $x_0,\d>0$ be numbers.
We say that the interval $[0,x_0]$ is an interval of $\d$-transversality for $\cP_A$
if for all $x\in[0,x_0]$ and $f\in\cP_A$ the inequality $f(x)<\d$ implies $f'(x)<-\d$.

We note a consequence of this definition.
If $f(x_1)<\d$ for some $x_1\in[0,x_0]$, then $f(x)<\d-\d(x-x_1)$ for all $x\in[x_1,x_0]$.
Indeed, if this was not true, by continuity, there is a smallest $x\in[x_1,x_0]$ with $f(x)\ge \d-\d(x-x_1)$.
Clearly $f(t)<\d$ for all $t\in[x_1,x]$, which implies $f'(t)<-\d$ by the definition of transversality, which
contradicts the mean value theorem.
This means, in particular, that a function $f\in\cP_A$ may have at most one zero in an interval of $\d$-transverasilty.

There are three sets, which we will use in the role of $A$ in this paper, and now we introduce short notation for them.
We write $\cP:=\cP_{\Z_{> 0}}$ and
\[
\cP_i:=\cP_{\{n\in\Z_{> 0}:3\nmid n-i\}}
\]
for $i=1,2$.

We recall the following result from \cites{Sol-Bernoulli,PS-Bernoulli}.

\begin{thm}\label{th:Solomyak}
There is an absolute constant $\d>0$ such that $[0,2^{-2/3}]$ is an interval of $\d$-transversality for $\cP$
and $[0,2^{-1/2}]$ is an interval of $\d$-transversality for $\cP_1$ and $\cP_2$.
\end{thm}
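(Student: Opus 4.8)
The plan is to use the transversality method of Solomyak, reducing the infinite family $\cP_A$ to an explicit one-parameter family of extremal rational functions and then verifying the inequality for those by direct computation. First I would dispose of the trivial range: on $[0,1/2)$ every $f\in\cP_A$ dominates the all-negative series,
\[
f(x)\ge 1-\sum_{n\ge1}x^n=\frac{1-2x}{1-x}>0,
\]
so for $\d$ small the hypothesis $f(x)<\d$ has no solution and there is nothing to prove. The content lies in the range $x\in[1/2,x_0]$, where $f$ may vanish, and the task is to rule out the simultaneous occurrence of $f(x)<\d$ and $f'(x)\ge-\d$.

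For a fixed $x$ in this range I would view $f'(x)=\sum_{n\in A}na_nx^{n-1}$ as a linear function of the coefficient string $(a_n)\in\{-1,0,1\}^A$, to be maximised under the single linear constraint $f(x)=1+\sum_{n\in A}a_nx^n\le\d$. The derivative-to-value ratio of the $n$th coefficient is $nx^{n-1}/x^n=n/x$, strictly increasing in $n$, so an exchange argument forces a maximiser to take $a_n=-1$ at all small indices and $a_n=+1$ at all large indices (with possibly one intermediate index set to $0$ to meet the constraint exactly). Hence it suffices to test the extremal functions that are $-1$ up to a threshold $m$ and $+1$ afterwards. For $A=\Z_{>0}$ these are
\[
f_m(x)=1-\sum_{n=1}^{m}x^n+\sum_{n>m}x^n=\frac{1-2x+2x^{m+1}}{1-x},
\]
together with the limiting function $f_\infty(x)=(1-2x)/(1-x)$.

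It then remains to verify, for each $m$, the implication $f_m(x)<\d\Rightarrow f_m'(x)<-\d$ on $[0,x_0]$. As the $f_m$ are explicit rational functions this is a finite computation: one checks that wherever a feasible $f_m$ is nonpositive its derivative is negative and bounded away from $0$, treating small $m$ directly and the tail $m\to\infty$ by comparison with $f_\infty$, whose unique zero in the range is the transversal point $1/2$. The true interval of transversality for $\cP$ reaches roughly $0.668$, so the stated endpoint $2^{-2/3}\approx0.630$ (which is dictated by the range of $\xi$ in the intended application) sits comfortably inside it; for instance $f_3$ stays above about $0.15$ on $[0,2^{-2/3}]$, and this slack keeps the estimates routine. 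Choosing $\d>0$ small at the end makes all the strict inequalities uniform.

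For $\cP_1$ and $\cP_2$ the index set omits one residue class modulo $3$, so the same reduction yields analogous extremal functions with gaps at the forbidden exponents; the missing terms make these functions reach zero more slowly, which is exactly what lets the analogous verification close out to the larger endpoint $2^{-1/2}$. The main obstacle, and the only genuinely delicate step, is this family-wide verification: controlling all the extremal functions $f_m$ (and their gapped analogues) uniformly in $m$ up to the stated endpoints, and extracting a single absolute constant $\d>0$ valid across all three families at once. The reduction to extremal functions is soft; the explicit estimates, and the bookkeeping that produces one uniform $\d$, are where the real work lies.
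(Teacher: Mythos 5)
Your proposal should first be measured against the right target: the paper does not prove Theorem \ref{th:Solomyak} at all, it quotes it from Peres--Solomyak \cite{PS-Bernoulli} (building on \cite{Sol-Bernoulli}), where it is deduced from the lemma at the start of their Section 3. Your approach is cognate to theirs but genuinely different in architecture. Your extremal threshold functions are exactly their ``$(*)$-functions'' $1-\sum_{n<m}x^n+sx^m+\sum_{n>m}x^n$, $s\in[-1,1]$ (adapted to the index set), but Peres--Solomyak never perform a family-wide verification: their lemma says that if a \emph{single} $(*)$-function $h$ satisfies $h(x_0)>\delta$ and $h'(x_0)<-\delta$ at the \emph{single} point $x_0$, then $[0,x_0]$ is an interval of $\delta$-transversality for the whole class. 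The proof is a coefficient-sign comparison: for any $f$ in the class, the coefficients of $h-f$ change sign once, so $y\mapsto y^{-m}\bigl(h(y)-f(y)\bigr)$ is nondecreasing, and this converts a one-point check into the full statement. Their verification is therefore two numerical inequalities for one explicit power series per class, and it yields an explicit $\delta$. Your LP/exchange reduction is sound and has the merit of explaining from first principles why the $(*)$-class is the extremal one, but it buys you precisely the burden you acknowledge at the end: uniform control of infinitely many functions over a whole interval, with $\delta$ produced only by compactness. The $(*)$-function lemma is the device that collapses that step; without it you are re-deriving the hard part by brute force, though the brute force can indeed be pushed through (small $m$ by direct computation, large $m$ by comparison with $f_\infty$).

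Two concrete inaccuracies in your reduction need repair before the plan is logically complete. First, the exchange argument is only valid in the relaxation $a_n\in[-1,1]$: a discrete exchange moving a unit of coefficient from index $j$ to index $k>j$ changes the derivative by $-jx^{j-1}+kx^{k-1}$, which is negative for instance when $j=4$, $k=5$, $x<4/5$, so the maximiser over $\{-1,0,1\}^A$ need not have threshold form. In the relaxation, the value-neutral infinitesimal exchange does increase the derivative (by $\epsilon x^{j-1}(k-j)$), and the maximiser is a threshold function with one coefficient lying \emph{anywhere} in $[-1,1]$ --- not ``set to $0$'' as your parenthetical says. Consequently, verifying the implication only for the $f_m$ (even adding the $s=0$ case) does not imply the theorem: the dangerous zero crossings occur at intermediate values of $s$, interpolating between an $f_m$ that stays positive and an $f_{m+1}$ that goes negative. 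Since $g_s(x)$ and $g_s'(x)$ are affine in $s$, including the full range $s\in[-1,1]$ is manageable, but it must be included. Second, your ``trivial range'' $[0,1/2)$ is not uniformly trivial: for any fixed $\delta>0$ the lower bound $(1-2x)/(1-x)$ drops below $\delta$ at points slightly to the left of $1/2$ (and $1-\sum_{n\ge1}x^n$ lies in $\cP$), so the hypothesis $f(x)<\delta$ does have solutions there. This is harmless --- your main argument nowhere uses $x\ge1/2$, so the split can simply be dropped --- but as written the two halves of your proof do not cover the whole interval.
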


See the proof of Theorem 1 at the end of Section 3 in \cite{PS-Bernoulli}, where these statements are deduced from the lemma at
the beginning of Section 3 in \cite{PS-Bernoulli}.

\subsection{Parameters with dimension drop I}\label{sc:Hochman}

Hochman proved that parameters $\l\in[1/2,1)$ with $\dim \nu_\l<1$ can be approximated by algebraic numbers
with high precision.

\begin{thm}[\cite{hochman}*{Theorem 1.9}]\label{th:Hochman1}
Suppose that $\dim\nu_\l<1$ for some $\l\in[1/2,1)$.
Then for every $\theta\in(0,1)$ and for all large enough $n$ (depending on $\l$ and $\theta$), there is a polynomial
$P\neq0$ of degree at most $n$ with coefficients $-1$, $0$ and $1$ such that $|P(\l)|<\theta^n$.
\end{thm}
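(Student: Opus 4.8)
The plan is to deduce the statement from Hochman's inverse theorem for the entropy of convolutions, applied to the self-similar structure of $\nu_\l$. The organising principle is that $\dim\nu_\l$ equals the exponential growth rate of the entropy of $\nu_\l$ at dyadic scales: writing $H(\mu,\cD_m)$ for the Shannon entropy (in base $2$) of a probability measure $\mu$ with respect to the partition of $\R$ into the intervals $[k2^{-m},(k+1)2^{-m})$, one has $\dim\nu_\l=\lim_{m\to\infty}H(\nu_\l,\cD_m)/m$. First I would restate the desired conclusion as a separation property of cylinders. For $u=(u_0,\ldots,u_{n-1})\in\{-1,1\}^n$ set $y_u=\sum_{k<n}u_k\l^k$ and let $\Delta_n$ be the least positive value of $|y_u-y_{u'}|$ over pairs $u\neq u'$. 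Since $y_u-y_{u'}=2P(\l)$ for a polynomial $P$ of degree $<n$ with coefficients in $\{-1,0,1\}$, the existence of a nonzero such $P$ with $|P(\l)|<\theta^n$ is equivalent, after harmlessly adjusting $\theta$, to the bound $\Delta_n<\theta^n$. It therefore suffices to show that $\dim\nu_\l<1$ forces $\Delta_n$ to decay super-exponentially, that is $\frac1n\log(1/\Delta_n)\to\infty$, which is exactly the assertion that $\Delta_n<\theta^n$ for every $\theta\in(0,1)$ and all large $n$.

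The decisive input is a dichotomy coming from self-similarity. The Bernoulli convolution satisfies $\nu_\l=\eta_n*(\l^n\nu_\l)$, where $\eta_n$ is the law of $\sum_{k<n}X_k\l^k$, an atomic measure whose atoms have minimal gap $\Delta_n$. When $\l>1/2$ the similarity dimension $\log2/\log(1/\l)$ exceeds $1$, so the generic value of the dimension is $\min(1,\log2/\log(1/\l))=1$. I would invoke Hochman's theorem in the form: if the atoms of $\eta_n$ are only exponentially concentrated, meaning $\Delta_n>c^n$ for a fixed $c>0$ along some sequence of scales, then $\dim\nu_\l=1$. Taking the contrapositive gives precisely the super-exponential collapse of $\Delta_n$ demanded above, so the whole burden falls on this theorem.

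To prove it I would run Hochman's multiscale entropy scheme. Its engine is the inverse theorem for the entropy of a convolution: if $H(\mu*\nu,\cD_m)$ exceeds $H(\mu,\cD_m)$ by no more than $\e m$, then for all but a $\d$-proportion of scales $i\le m$ the scale-$i$ component of $\mu$ is close to uniform on an arithmetic progression, or else the scale-$i$ component of $\nu$ is close to a single atom (a multiscale analogue of Freiman's theorem, built on Tao's entropy inequalities for sumsets). I would feed the decomposition $\nu_\l=\eta_n*(\l^n\nu_\l)$ into this machine. If the atoms $y_u$ of $\eta_n$ were genuinely well-separated, so that $\Delta_n$ is large, then $\eta_n$ would carry full entropy and convolving it with the positive-dimensional factor $\l^n\nu_\l$ would increase entropy, yielding $\dim\nu_\l=1$. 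The hypothesis $\dim\nu_\l<1$ rules this out, so by the inverse theorem $\eta_n$ must be structured --- approximately supported on an arithmetic progression far shorter than its $2^n$ potential atoms --- at a positive proportion of scales. This means many of the $y_u$ nearly coincide, that is $\Delta_n$ is small; the exact self-similarity then lets one compose this concentration across scales, and a bootstrap upgrades it into the quantitative collapse $\Delta_n<\theta^n$.

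The hard part is the inverse theorem for entropy itself, an additive-combinatorial statement of real depth, together with the multiscale bookkeeping that converts its ``most scales'' conclusion into a bound at the single scale $n$. One must verify its hypotheses uniformly in $n$ using the exact self-similarity of $\nu_\l$, and control the accumulation of the small exceptional proportions so that the resulting decay rate outpaces every fixed $\theta^n$; this error management, rather than any single inequality, is where the argument is most delicate.
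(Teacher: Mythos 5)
The first thing to note is that the paper does not prove this statement at all: it is imported verbatim from Hochman's work (it is Theorem 1.9 of \cite{hochman}, as the bracketed citation in the theorem header indicates) and is used in this paper purely as a black box. So the comparison to make is between your proposal and Hochman's own proof. Your opening reduction is correct and is indeed the right dictionary: differences $y_u-y_{u'}$ of cylinder points are exactly twice the values $P(\l)$ of polynomials with coefficients in $\{-1,0,1\}$, so (modulo the harmless caveat that an exact root $P(\l)=0$ makes the conclusion trivial) the statement is equivalent to the super-exponential collapse $\tfrac1n\log(1/\Delta_n)\to\infty$ of the minimal gap.

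The gap is in everything after that. The ``theorem'' you then invoke --- if $\Delta_n>c^n$ along a subsequence of scales then $\dim\nu_\l=1$ --- is precisely the contrapositive of the statement you are asked to prove, so citing it is circular; you acknowledge this by conceding that the whole burden falls on it. Your subsequent sketch of its proof is a fair roadmap of Hochman's argument (the decomposition $\nu_\l=\eta_n*\l^n\nu_\l$, the inverse theorem for entropy of convolutions, the saturation-versus-atomicity dichotomy at most scales, the multiscale bookkeeping), but none of it is carried out: the inverse theorem is a deep result occupying much of Hochman's paper, and the step converting ``most components of $\eta_n$ are nearly atomic'' into a quantitative bound on $\Delta_n$ at the single scale $n$ is its technical heart. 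Moreover, the mechanism you describe is not quite the right one: the argument is not that well-separated atoms would ``increase entropy to full,'' but rather that $\dim\nu_\l=\alpha<1$ forces $\eta_n$ to gain essentially no entropy between scale $\l^n$ and scale $2^{-qn}$ for every fixed $q$, so its entropy at scale $2^{-qn}$ is at most roughly $\alpha n\log(1/\l)<n\log2$, contradicting the full entropy $n\log 2$ that $c^n$-separation of the $2^n$ atoms would force once $2^{-q}<c$. As it stands, then, your proposal is a plan for reproving Hochman's theorem rather than a proof; the defensible version of it is simply to cite \cite{hochman}, which is exactly what the paper does.
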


Since $[0,2^{-1/2}]$ is not an interval of $\d$-transversality for $\cP$ for any $\d>0$,
we will need to borrow a trick from \cite{PS-Bernoulli} and consider trimmed Bernoulli convolutions.
For a parameter $\l\in(0,1)$ we denote by $\wt\nu_\l$ the law of the random variable
$\sum_{n:3\nmid n-2}X_n\l^n$, where
$X_n$ is a sequence of independent random variables taking the values $\pm1$ with equal probability.
These trimmed Bernoulli convolutions satisfy an analogue of Theorem \ref{th:Hochman1} as follows.
We note that $\wt \nu_\l$ are exact dimensional by \cite{feng-hu}, so we can talk about their dimensions
in the same way as in the case of Bernoulli convolutions.
We write $\cQ$ for the set of polynomials whose coefficients are $-1$, $0$ and $1$ and the coefficient of $x^n$
is always $0$ when $3|n-2$.

\begin{thm}\label{th:Hochman2}
Suppose that $\dim\wt\nu_\l<1$ for some $\l\in[2^{-2/3},1)$.
Then for every $\theta\in(0,1)$ and for all large enough $n$ (depending on $\l$ and $\theta$), there is polynomial
$P\neq0\in\cQ$ of degree at most $n$ such that $|P(\l)|<\theta^n$.
\end{thm}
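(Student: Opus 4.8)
The plan is to recognize $\wt\nu_\l$ as a homogeneous self-similar measure and feed it into Hochman's general inverse theorem for self-similar measures on $\R$ (the result underlying Theorem \ref{th:Hochman1}), rather than into the Bernoulli-specific Theorem \ref{th:Hochman1} itself, which does not apply because $\wt\nu_\l$ is not a Bernoulli convolution in any base. First I would group the defining sum into blocks of three consecutive indices. Writing $W=\sum_{n:3\nmid n-2}X_n\l^n$ and separating the $n=0$ and $n=1$ terms from the rest yields the distributional identity $W=(X_0+\l X_1)+\l^3 W'$, where $W'$ has the same law as $W$ and is independent of $(X_0,X_1)$. Hence $\wt\nu_\l$ is the self-similar measure attached to the four maps $f_t(x)=\l^3 x+t$ with $t\in\{\pm1\pm\l\}$, each taken with probability $1/4$.

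Next I would compute the similarity dimension of this system, namely $\log 4/(3\log\l^{-1})$, and note that it is at least $1$ precisely when $\l^3\ge1/4$, i.e. $\l\ge2^{-2/3}$. This is exactly the range in the hypothesis, and it guarantees that the assumption $\dim\wt\nu_\l<1$ is a genuine drop below $\min(1,\text{similarity dimension})$. The key step is then to apply Hochman's theorem \cite{hochman}, which asserts that for such a homogeneous self-similar measure a dimension drop below the similarity dimension forces super-exponential concentration of the generation-$N$ cylinders: if $\Delta_N$ denotes the minimal distance between $f_u(0)$ and $f_v(0)$ over distinct length-$N$ words $u,v$ (with $f_u$ the corresponding composition), then $N^{-1}\log\Delta_N\to-\infty$.

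Finally I would translate this separation statement into the desired polynomial estimate. Expanding a composition gives $f_u(0)=\sum_{k=0}^{N-1}\l^{3k}(a_k+b_k\l)$ with $a_k,b_k\in\{\pm1\}$, so the difference $f_u(0)-f_v(0)$ equals $2P(\l)$ for a polynomial $P\in\cQ$ of degree at most $3N-2$, which is nonzero exactly when $u\ne v$ (and $P(\l)=0$ in the event of an exact overlap). Thus $\Delta_N=2\min\{|P(\l)|:P\in\cQ\setminus\{0\},\ \deg P\le 3N-2\}$. Given $\theta\in(0,1)$ and a large $n$, I would set $N=\lceil n/3\rceil$, so that the resulting $P$ has degree at most $3N-2\le n$, and apply the super-exponential decay with rate $\theta^3$: for all large $N$ one has $\Delta_N<(\theta^3)^N$, whence $|P(\l)|<\theta^{3N}\le\theta^n$, using $3N\ge n$ and $\theta<1$. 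Since $N\to\infty$ with $n$, this holds for all large enough $n$, as required.

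The main obstacle is not any single estimate but the careful bookkeeping needed to invoke Hochman's general theorem cleanly in place of the quoted Bernoulli case: verifying that the four-map system meets its hypotheses, matching his cylinder-separation quantity $\Delta_N$ with the polynomial class $\cQ$ (including the degree shift from $N$ to $3N-2$ and the harmless exact-overlap case), and aligning the exponents so that the final bound is stated for degree at most $n$ rather than at most $3N$.
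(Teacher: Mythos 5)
Your proposal is correct and is essentially the paper's own argument: the paper proves Theorem \ref{th:Hochman2} by a one-line reference to Hochman's general inverse theorem for self-similar measures on $\R$ (\cite{hochman}*{Theorem 1.7}), applied ``in exactly the same way'' as in the untrimmed Bernoulli case, which is precisely what you carry out by realizing $\wt\nu_\l$ as the homogeneous self-similar measure of the four maps $x\mapsto\l^3x+t$, $t\in\{\pm1\pm\l\}$, noting the similarity dimension $\log 4/(3\log\l^{-1})\ge 1$ exactly when $\l\ge 2^{-2/3}$, and converting the super-exponential decay of cylinder separation into small values of nonzero polynomials in $\cQ$. Your write-up merely supplies the bookkeeping (the degree shift to $3N-2$, the factor $2$, the exact-overlap case) that the paper leaves implicit.
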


This theorem can be deduced from \cite{hochman}*{Theorems 1.7} in exactly the same way as
\cite{hochman}*{Theorem 1.9}, (which we recalled above in
Theorem \ref{th:Hochman1}).

\subsection{Entropy and Mahler measure}\label{sc:BV1}

Theorem \ref{th:Hochman1} implies that $\dim\nu_\l=1$ for all algebraic parameters $\l\in(1/2,1)$ that are not roots
of polynomials with coefficients $-1$, $0$ and $1$.
Moreover, Hochman \cite{hochman} has the following even more precise result about algebraic parameters.

\begin{thm}\label{th:Hochman3}
Let $\l\in[1/2,1)$ be algebraic.
Then
\[
\dim\nu_\l=\min\Big(1,\frac{h_\l}{\log\l^{-1}}\Big),
\]
where
\[
h_\l=\lim_{N\to \infty}\frac{1}{N}H\Big(\sum_{n=0}^{N-1}X_n\l^n\Big)
\]
and $H(\cdot)$ denotes the Shannon entropy of a discrete random variable.
\end{thm}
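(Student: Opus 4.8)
The plan is to prove the two inequalities separately: the upper bound $\dim\nu_\l\le\min(1,h_\l/\log\l^{-1})$, which is soft and holds for every $\l\in(0,1)$, and the matching lower bound, which is the substantial part and is where the algebraicity of $\l$ enters.

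For the upper bound I would use self-similarity in additive form. Writing $\nu_\l^{(N)}$ for the law of the partial sum $\sum_{n=0}^{N-1}X_n\l^n$ and $H_N=H(\nu_\l^{(N)})$, independence of the $X_n$ gives $\nu_\l=\nu_\l^{(N)}*(S_{\l^N})_*\nu_\l$ with $S_{\l^N}(x)=\l^N x$, and in particular subadditivity $H_{N+M}\le H_N+H_M$, so that $h_\l=\lim_N H_N/N=\inf_N H_N/N$. Coarsening $\nu_\l$ to the partition of $\R$ into intervals of length $\l^N$, the factor $\nu_\l^{(N)}$ contributes at most $H_N$ while the tail $(S_{\l^N})_*\nu_\l$ lives on an interval of length $O(\l^N)$ and contributes only $O(1)$; since $\supp\nu_\l$ is bounded one also has the trivial bound $N\log\l^{-1}+O(1)$. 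Hence $H(\nu_\l,\l^N)\le\min(N\log\l^{-1},H_N)+O(1)$, and dividing by $N\log\l^{-1}$ and using exact dimensionality (so that $\dim\nu_\l=\lim_N H(\nu_\l,\l^N)/(N\log\l^{-1})$) yields the upper bound.

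For the lower bound I would feed Garsia's estimate into Hochman's machinery. View $\nu_\l$ as the self-similar measure of the pair of maps $x\mapsto\l x\pm1$ with equal weights. The minimal distance between two distinct $n$-fold compositions is $|P(\l)|$ for some nonzero $P$ with coefficients in $\{-1,0,1\}$ and degree at most $n$, so Theorem \ref{th:Garsia} gives the exponential separation $|P(\l)|\ge M(\l)^{-n}(n+1)^{1-d}\ge\rho^n$ for a fixed $\rho=\rho(\l)>0$. Hochman's theorem then identifies $\dim\nu_\l$ with $\min(1,h/\log\l^{-1})$, where $h$ is the random-walk (branching) entropy of the system, which here is exactly $h_\l=\lim_N H_N/N$. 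When $\l$ is not a root of any $\{-1,0,1\}$-polynomial this is the clean dichotomy already available from Theorem \ref{th:Hochman1}: were $\dim\nu_\l<1$, that theorem would furnish nonzero $P$ with $|P(\l)|<\theta^n$ for every $\theta$, and taking $\theta<M(\l)^{-1}$ contradicts Theorem \ref{th:Garsia}, forcing $\dim\nu_\l=1=\min(1,h_\l/\log\l^{-1})$ since here $h_\l=\log2$. The genuinely overlapping case $h_\l<\log2$, where $\l$ does satisfy $P(\l)=0$ for some such $P$, is harder: exact coincidences are discounted in $H_N$ and so do not violate separation between distinct compositions, but recovering the precise value $h_\l/\log\l^{-1}$ (rather than merely $\dim\nu_\l\le1$) requires the full strength of the inverse theorem.

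The main obstacle is exactly this inverse theorem for the entropy of convolutions: the implication that any failure of the scale-$\l^N$ entropy to reach $\min(N\log\l^{-1},H_N)-o(N)$ forces the fine-scale components of $\nu_\l$ into approximate arithmetic progressions, hence into super-exponentially close compositions. This is the technical heart of \cite{hochman}; by comparison the algebraic input through Theorem \ref{th:Garsia} is elementary, its only role being to convert that concentration into an arithmetic impossibility and, in the overlapping case, to certify exponential separation of the distinct compositions so that the branching entropy $h_\l$---and not the naive $\log2$---is the sharp value.
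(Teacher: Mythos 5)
Your outline is correct and follows the same route the paper relies on: the paper gives no proof of this theorem at all, but instead cites \cite{BV-entropy}*{Section 3.4}, where the statement is formally deduced from Hochman's inverse-theorem machinery in \cite{hochman}, with Theorem \ref{th:Garsia} supplying the exponential separation between distinct $n$-fold compositions --- exactly the two ingredients your sketch combines. Like the paper, you leave the genuinely hard step (extracting the precise value $h_\l/\log\l^{-1}$ in the presence of exact overlaps) to the inverse theorem itself, so there is nothing to fault: your soft upper bound via subadditivity and entropy coarsening, and your separation-based lower bound, are the standard and correct scaffolding around that citation.
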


See \cite{BV-entropy}*{Section 3.4}, where this is formally deduced from the main result of Hochman \cite{hochman}.

Theorem \ref{th:Hochman3} reduces Question \ref{qu:dim} for algebraic parameters to determining when
$h_\l\ge \log \l^{-1}$ holds.
The quantity $h_\l$ (also called Garsia entropy) received considerable attention recently.
In particular, it was proved by Breuillard and Varj\'u \cite{BV-entropy} that
\[
c\min(\log 2, \log M_\l)\le h_\l\le\min(\log 2, \log M_\l)
\]
for an absolute constant $c>0$.
According to (not rigorous) numerical calculations reported in \cite{BV-entropy}
the constant $c$ can be taken $0.44$.
A recent paper of Akiyama, Feng, Kempton and Persson \cite{AFKP-Bernoulli} gives an
algorithm that allows one to approximate $h_\l$ with arbitrary
precision with a finite computation.
This means that for all algebraic parameters that satisfy $h_\l>\log\l^{-1}$, $\dim\nu_\l=1$ can, in principle, be proved
by finite computation.

The results in \cite{BV-entropy} also allow us to deduce the following.

\begin{thm}\label{th:BV1}
For any $h\in(0,\log 2)$, there is a number $C(h)$ such that
$h_\l\ge h$ for all algebraic numbers $\l$ with $M(\l)\ge C(h)$.
\end{thm}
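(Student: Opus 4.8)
The plan is to extract more than the quoted inequality $c\min(\log2,\log M(\l))\le h_\l$ provides: since that bound saturates at $c\log2<\log2$, it cannot by itself yield $h_\l\ge h$ when $h$ is close to $\log2$. Instead I would prove the quantitative statement that the entropy deficit $\log2-h_\l$ tends to $0$ as $M(\l)\to\infty$. Writing $Y_N=\sum_{n=0}^{N-1}X_n\l^n$ and $H_N=H(Y_N)$, the two halves $\sum_{n<N}X_n\l^n$ and $\l^N\sum_{n<M}X_{N+n}\l^n$ are independent, so $H_{N+M}\le H_N+H_M$ and hence $h_\l=\inf_N H_N/N$. Thus it suffices to bound each $H_N/N$ from below. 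For this I use $H_N=N\log2-H(X_0,\dots,X_{N-1}\mid Y_N)$: given the value of $Y_N$ the sign pattern ranges over one fiber, and two patterns share a fiber exactly when their halved difference is a polynomial with coefficients in $\{-1,0,1\}$ and degree $<N$ that vanishes at $\l$. Consequently $H_N\ge N\log2-\log K_N$, where $K_N$ counts such vanishing polynomials, and therefore
\[
h_\l\ge\log2-\sup_{N\ge1}\frac{\log K_N}{N}.
\]

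The task is then to bound $K_N$. A nonzero polynomial with coefficients in $\{-1,0,1\}$ vanishing at $\l$ is an integer multiple $m_\l Q$ of the minimal polynomial $m_\l$ (of degree $d$) of $\l$, so the admissible $P=m_\l Q$ are precisely the points of the lattice $m_\l\cdot\Z[x]_{\le N-1-d}$ lying in the cube $[-1,1]^N\subset\R^N$ (identifying a polynomial of degree $<N$ with its coefficient vector). Several cases are immediate. If $\l$ is not an algebraic integer, the leading coefficient of $m_\l Q$ has absolute value $\ge2$ for $Q\neq0$, so $K_N=1$; and if $\l$ has a conjugate $\l_j$ with $|\l_j|\ge2$, then every nonzero $P$ with coefficients in $\{-1,0,1\}$ satisfies $|P(\l_j)|\ge1$ by the geometric series, yet $P$ would have to vanish at $\l_j$, so again $K_N=1$. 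In either case $h_\l=\log2$. This leaves the genuinely hard case: $\l$ is an algebraic integer all of whose conjugates satisfy $|\l_j|<2$, whence $M(\l)=\prod_{|\l_j|>1}|\l_j|<2^d$ and a large Mahler measure forces the degree to be large, $d\gtrsim\log_2 M(\l)$.

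In this remaining regime $K_N$ is governed by the fine geometry of the multiple-lattice. By Jensen's formula its covolume equals $M(\l)^N$ up to a factor $e^{o(N)}$, so when $M(\l)$ is large the lattice is sparse and one expects $\log K_N=o(N)$ uniformly; the complementary Garsia-type separation of Theorem \ref{th:Garsia} controls the spacing of the \emph{distinct} values and yields the matching upper bound $h_\l\le\log M(\l)$. The main obstacle is that the maximal fiber size, and hence $K_N$, is controlled not by the covolume alone but by the \emph{short} vectors of the lattice: a single small-coefficient multiple of $m_\l$, as occurs for inverse Pisot parameters, already produces long chains of colliding patterns and a linear deficit. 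Moreover the relevant cube-section and lattice-point estimates carry errors of size $e^{O(d)}$ that must be overcome uniformly in $N$ even though $d$ is large. Establishing $\sup_N(\log K_N)/N\le\e(M(\l))$ with $\e(M)\to0$, i.e. refining the crude constant $c$ to $1-o(1)$ as $M(\l)\to\infty$, is exactly the quantitative content supplied by the estimates of \cite{BV-entropy}. Granting it, one chooses $C(h)$ so that $\e(C(h))\le\log2-h$; then $M(\l)\ge C(h)$ gives $h_\l\ge\log2-\e(M(\l))\ge h$.
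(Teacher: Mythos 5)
Your reduction is sound as far as it goes: subadditivity of $N\mapsto H(Y_N)$ gives $h_\lambda=\inf_N H_N/N$, the identity $H_N=N\log 2-H(X_0,\dots,X_{N-1}\mid Y_N)$ together with the fact that each fibre injects into the set of $\{-1,0,1\}$-relations yields $h_\lambda\ge\log 2-\liminf_N(\log K_N)/N$, and the two easy cases ($\lambda$ not an algebraic integer, or $\lambda$ having a conjugate of modulus at least $2$) are handled correctly. But the entire content of the theorem is then concentrated in the one step you do not prove, namely $\sup_N(\log K_N)/N\le\varepsilon(M(\lambda))$ with $\varepsilon(M)\to 0$, and your attribution of that estimate to \cite{BV-entropy} is incorrect. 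What \cite{BV-entropy} supplies (Proposition 13 there, and this is what the paper actually uses) is the entropy inequality $h_\lambda\ge\Phi(M(\lambda))$, where
\[
\Phi(a)=\sup_{t>0}\bigl(H(X_0ta+G)-H(X_0t+G)\bigr),
\]
with $G$ a standard Gaussian and $H$ differential entropy; the paper then finishes in two lines by checking that $\lim_{a\to\infty}\Phi(a)=\log 2$ (plug in $t=a^{-1/2}$ and use that the two entropies converge to $H(G)$ and $\log 2+H(G)$ respectively). No counting of $\{-1,0,1\}$-relations, lattice points, or fibre sizes occurs anywhere in that argument.

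Moreover, your counting statement is strictly stronger than the entropy statement, so it also cannot be recovered from the \emph{conclusions} of \cite{BV-entropy} after the fact. The deficit $\log 2-h_\lambda$ is an average of $\log(\text{fibre size})$ over sign patterns, while $K_N$ counts all relations: a relation $P$ with $|\supp P|=s$ affects the fibre of only a $2^{-s}$-fraction of sign patterns, so exponentially many relations of large support are compatible with a negligible entropy deficit. Hence ``$h_\lambda$ close to $\log 2$'' does not yield ``$\log K_N=o(N)$ uniformly''; the implication you can legitimately use runs only in the direction counting $\Rightarrow$ entropy, which is the direction you need but do not establish. Whether the uniform counting bound with $\varepsilon(M)\to0$ is even true is, as far as I can tell, not settled in the literature; your own remarks about short lattice vectors and error factors of size $e^{O(d)}$ accurately describe why it is hard, but describing an obstacle is not overcoming it. As it stands, the proposal reduces the theorem to an unproven, and in fact stronger, statement, and so has a genuine gap exactly where the theorem's difficulty lies.
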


\begin{rmk}\label{rm:referee}
This theorem is of independent interest.
Following the below proof, one may compute an explicit function $C(h)$ with which the theorem holds
and use it to give new examples of algebraic parameters that make the Bernoulli convolution have
dimension $1$.
\end{rmk}

\begin{proof}
Let $X_0$ be a random variable taking the values $\pm1$ with equal probability and let $G$ be an independent (from $X_0$)
standard Gaussian random variable.
For $a\in\R_{\ge 1}$, we define
\[
\Phi(a)=\sup_{t>0}(H(X_0ta+G)-H(X_0t+G)),
\]
where $H(\cdot)$ is now the differential entropy of an absolutely continuous random variable.

By \cite{BV-entropy}*{Proposition 13}, we have $h_\l\ge \Phi(M(\l))$ for all algebraic numbers $\l$.
It is easy to see that
\[
\lim_{a\to \infty}H(X_0a^{-1/2}+G)=H(G)
\]
and
\[
\lim_{a\to\infty} H(X_0a^{1/2}+G)=\log 2+H(G).
\]
Plugging in $t=a^{-1/2}$ to the definition of $\Phi$, we see that
\[
\lim_{a\to\infty}\Phi(a)=\log 2.
\]
(Here we also used the fact that $\Phi(a)\le \log 2$, which can be seen for example from $h_\l\le\log 2$.)
This proves the claim.
\end{proof}

\subsection{Parameters with dimension drop II}\label{sc:BV2}

Breuillard and Varj\'u also gave approximations for parameters $\l\in[1/2,1)$ with $\dim\nu_\l<1$
by algebraic numbers.
The following is a simplified version of \cite{BV-transcendent}*{Theorem 1}. 

\begin{thm}\label{th:BV2}
Suppose that $\dim\nu_\l<1$ for some $\l\in[1/2,1)$.
Then there are arbitrarily large integers $d$ such that there is an algebraic number $\eta=\eta(d)$
with $\deg\eta\le d$, $\dim\nu_\eta<1$ and
\[
|\l-\eta|\le\exp(-d^2).
\]
\end{thm}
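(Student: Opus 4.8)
The plan is to read the dimension drop at $\lambda$ off as a family of super-exponentially good polynomial relations, to compress these into a single low-degree algebraic relation defining $\eta$, and finally to transfer the dimension drop from $\lambda$ to $\eta$. Since $\dim\nu_\lambda<1$, Theorem \ref{th:Hochman1} supplies, for every $\theta\in(0,1)$ and all large $n$, a nonzero polynomial $P_n$ of degree at most $n$ with coefficients in $\{-1,0,1\}$ such that $|P_n(\lambda)|<\theta^n$. Each $P_n$ is a near-relation for $\lambda$; the obstacle is that its roots are a priori of degree as large as $n$ and approximate $\lambda$ only to precision exponential in that degree, whereas the statement demands degree $d$ together with the far sharper precision $\exp(-d^2)$.

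First I would lower the degree by elimination. The point is that if $\eta$ is an honest root of a factor $g$ of some $P_n$, then $P_n(\eta)=0$ exactly, so a near-relation for $\lambda$ becomes an exact $\{-1,0,1\}$-relation for $\eta$. Extracting such a factor of prescribed low degree $d$ from the integer polynomials $P_n$---via resultants or a common-factor argument---and estimating $|\lambda-\eta|$ in terms of $|P_n(\lambda)|$, one arranges $|\lambda-\eta|\le\exp(-d^2)$ by reading the relations at a scale $n$ of order $d^2$; letting the degree of the extracted factor grow then furnishes the arbitrarily large $d$ required.

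Next I would transfer the dimension drop to $\eta$. Because $\eta$ lies within $\exp(-d^2)$ of $\lambda$ and the $P_n$ are Lipschitz with a polynomially bounded constant, $|P_n(\eta)|$ remains small at every scale $n$ up to order $d^2$. Here the fact that $\eta$ is \emph{algebraic} becomes decisive: Garsia's lower bound (Theorem \ref{th:Garsia}), $|P_n(\eta)|\ge M(\eta)^{-n}(n+1)^{1-d}$, is incompatible with this smallness once the Mahler measure $M(\eta)$ is bounded, and so it forces $P_n(\eta)=0$ \emph{exactly} across a range of scales. Thus $\eta$ satisfies genuine $\{-1,0,1\}$-relations, producing many exact overlaps in the construction of $\nu_\eta$; quantifying their density should drive the Garsia entropy $h_\eta$ below $\log\eta^{-1}$, whereupon Theorem \ref{th:Hochman3} gives $\dim\nu_\eta<1$.

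The hard part will be this final transfer, for two linked reasons. First, one must guarantee that the extracted $\eta$ has bounded Mahler measure uniformly in $d$, so that Garsia's separation is weak enough to be overwhelmed by the approximation (whereupon $\theta$ can be chosen below $M(\eta)^{-1}$); this has to be secured already in the elimination step and reflects the principle that a genuine dimension drop originates from overlaps rather than from arithmetic complexity. Second, and more seriously, a single relation is not enough: not every root of a $\{-1,0,1\}$ polynomial has $\dim\nu_\eta<1$, so one must convert the whole family of relations into a quantitative entropy deficit $h_\eta<\log\eta^{-1}$. Since $\log\eta^{-1}$ can be much smaller than $\log 2$ when $\eta$ is close to $1$, this may require overlaps of very high density, far beyond what any single relation provides. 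Coordinating the three competing budgets---the target degree $d$, the scale $n\sim d^2$ at which the relations are read off, and the Mahler-measure and entropy deficit---so that all the inequalities hold simultaneously is the technical core, and it is precisely here that the companion entropy estimates of Breuillard and Varj\'u are brought to bear.
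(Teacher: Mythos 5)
First, a point of orientation: the paper does not prove Theorem \ref{th:BV2} at all; it imports it as a black box, as a simplified form of Theorem 1 of \cite{BV-transcendent}, whose proof is a long multi-scale entropy argument (entropy of convolutions between scales, an inverse theorem for entropy growth, and the estimate $h_\eta\ge c\min(\log 2,\log M(\eta))$). Your sketch, which tries to derive the statement from Theorem \ref{th:Hochman1} by elimination, is therefore a different route in principle --- but it contains gaps that I do not see how to repair along the lines you indicate. The first and most serious one is the degree-reduction step. Theorem \ref{th:Hochman1} produces, at scale $n$, a nonzero polynomial $P_n$ of degree up to $n$ with $|P_n(\l)|<\theta^n$, i.e.\ smallness that is only \emph{exponential} in the degree; to reach the conclusion you must work at $n\sim d^2$ and extract from $P_n$ a factor of degree at most $d\sim\sqrt n$ possessing a root within $\exp(-d^2)$ of $\l$. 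Nothing guarantees such a factor exists: $P_n$ may be irreducible, or every irreducible factor with a root near $\l$ may have degree comparable to $n$. Resultants and common-factor arguments cannot manufacture a low-degree factor that is not there; this tension between degree and quality of approximation is exactly the difficulty the theorem resolves, and your sketch assumes it away.

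The later steps have two further gaps, which you partly acknowledge but do not bridge. To force $P_n(\eta)=0$ from $|P_n(\eta)|$ being small via Theorem \ref{th:Garsia}, you need $M(\eta)=O(1)$ uniformly in $d$. The only bound your construction provides is Landau's inequality $M(\eta)\le M(P_n)\le\|P_n\|_2\le\sqrt{n+1}$, with which Garsia's lower bound degrades to roughly $(n+1)^{-n/2-d}$; this is far smaller than $|P_n(\eta)|\le\theta^n+n^2\exp(-d^2)$, so no vanishing is forced. The non-circular route to $M(\eta)=O(1)$ used in this paper (Theorems \ref{th:Hochman3} and \ref{th:BV1}) takes $\dim\nu_\eta<1$ as its \emph{input} --- precisely the conclusion you are trying to establish for $\eta$, so you cannot appeal to it. Finally, even granting many exact relations, the inference to $h_\eta<\log\eta^{-1}$ does not follow: a single relation $P(\eta)=0$ of degree $\sim d$ already yields exact relations at every larger scale (multiply $P$ by $x^k$), yet it produces only an entropy deficit that is exponentially small in $d$, whereas you need a deficit of definite size, at least $\log 2-\log\eta^{-1}$, independent of $d$. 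Your own text concedes that ``a single relation is not enough'' and that the needed density of overlaps is ``far beyond what any single relation provides,'' but offers no mechanism to obtain it. In sum, the proposal correctly identifies the three obstacles (degree versus precision, uniform Mahler-measure control, and the quantitative entropy deficit) but resolves none of them; it is not a proof, and it is also quite far from the actual argument of \cite{BV-transcendent}.
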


There are a number of differences compared with Theorem \ref{th:Hochman1}.
The approximation is not claimed in Theorem \ref{th:BV2} on all sufficiently large scales,
but only on a (possibly very sparse) sequence of scales.
However, the approximating parameter satisfies $\dim\nu_\eta<1$ and a better estimate for the
distance to $\l$.
Both of these features and the fact that the approximation is available at all (sufficiently large)
scales in Theorem \ref{th:Hochman1} are critically important for the success of our proof of Theorem \ref{th:main}.

\section{Proof of Theorem \ref{th:main}}\label{sc:proof}

We begin by formalizing the statement about lower bounds on the values of polynomials
that we made after Theorem \ref{th:main}.
Recall the notation $\cQ$ from Section \ref{sc:Hochman}.

\begin{prp}\label{pr:separation}
Let $\xi\in(1/2,2^{-1/2})$ be an algebraic number of degree at most $d$
of Mahler measure at most $M$.
Let $n>10 d\log d$ be an integer.
Let $P\neq 0\in\cQ$ be a polynomial of degree at most $n$.
Then $|P(\l)|>(20M)^{-n}$ for all $\l$ satisfying $(5M)^{-n}\le |\l-\xi|\le (5M)^{-n+1}$
provided $d$ is larger than an absolute constant.
If $\xi\in(1/2,2^{-2/3}]$, then the claim also holds for all polynomials $P\neq 0$ of degree at most $n$
with coefficients $-1$, $0$ and $1$.
\end{prp}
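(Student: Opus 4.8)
The plan is to strip off the lowest-order term of $P$ and reduce to a lower bound for a normalized series to which transversality applies, and then to split according to whether $\xi$ is a root of $P$. Write $P(x)=\pm x^k f(x)$, where $k$ is the order of vanishing of $P$ at $0$ and $f(0)=1$. For $P\in\cQ$ the lowest nonzero exponent satisfies $k\not\equiv 2\pmod 3$, so after subtracting $k$ from every exponent the surviving exponents avoid the residue class $2-k\pmod 3$; hence $f\in\cP_1\cup\cP_2$ (and $f\in\cP$ in the case of arbitrary coefficients $-1,0,1$). Since $\l$ lies within $(5M)^{-n+1}$ of $\xi>1/2$, we have $\l>2/5$ and therefore $|\l|^k\ge(2/5)^n$, so it suffices to bound $|f(\l)|$ from below. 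I will use repeatedly the crude bound $|f'(x)|\le\sum_{j=1}^n j\le n^2$ valid on $[0,1]$.

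If $P(\xi)\neq 0$, I would invoke Garsia's estimate (Theorem \ref{th:Garsia}). As $|\xi|<1$,
\[
|f(\xi)|=|P(\xi)|\,|\xi|^{-k}\ge|P(\xi)|\ge M^{-n}(n+1)^{1-d},
\]
and the mean value theorem over the annulus gives
\[
|f(\l)|\ge|f(\xi)|-n^2|\l-\xi|\ge M^{-n}(n+1)^{1-d}-n^2(5M)^{-n+1}.
\]
Everything now hinges on the arithmetic inequality
\[
n^2(5M)^{-n+1}\le\tfrac12 M^{-n}(n+1)^{1-d},\qquad\text{i.e.}\qquad 5^{n}\ge 10Mn^2(n+1)^{d-1},
\]
which for $n>10d\log d$ and $d$ above an absolute constant forces the Garsia term to swallow the first-order perturbation, yielding $|f(\l)|\ge\tfrac12 M^{-n}(n+1)^{1-d}$. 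Put differently, this inequality says that no root of $P$ lies within $(5M)^{-n+1}$ of $\xi$, so on the annulus $f$ cannot come close to vanishing; note that here only the outer radius is used.

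If instead $P(\xi)=0$, then $\xi\neq 0$ forces $f(\xi)=0$, Garsia is vacuous, and I would appeal to transversality (Theorem \ref{th:Solomyak}). Since $f(\xi)=0<\d$ and $f$ changes by at most $n^2(5M)^{-n+1}<\d$ across the interval $I=[\xi-(5M)^{-n+1},\xi+(5M)^{-n+1}]$, every point of $I$ satisfies $f<\d$, whence $f'<-\d$ on all of $I$. (That $I$ sits inside the transversality interval $[0,2^{-1/2}]$ for $d$ large follows from a Liouville-type lower bound for $2^{-1/2}-\xi$ combined with $n>10d\log d$; at the endpoint $2^{-2/3}$ of the second statement the case $P(\xi)=0$ does not even occur, since $2^{-2/3}$ is not a root of any $\{-1,0,1\}$-polynomial.) Integrating from $\xi$ to $\l$ then gives
\[
|f(\l)|=\Big|\int_\xi^\l f'\Big|\ge\d|\l-\xi|\ge\d(5M)^{-n},
\]
and it is precisely here that the inner radius $(5M)^{-n}$ enters, keeping $\l$ away from the root at $\xi$.

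Multiplying the two cases by $|\l|^k\ge(2/5)^n$ and invoking the displayed inequality once more turns both lower bounds into $|P(\l)|>(20M)^{-n}$. The statement for $\xi\in(1/2,2^{-2/3}]$ with arbitrary coefficients is proved identically, using $\cP$ and its interval of transversality $[0,2^{-2/3}]$ in place of $\cP_1\cup\cP_2$ and $[0,2^{-1/2}]$. The crux—and the only genuinely quantitative step—is the arithmetic inequality $5^{n}\ge 10Mn^2(n+1)^{d-1}$: it is where the hypothesis $n>10d\log d$ and the largeness of $d$ are consumed, and the main thing to check is that $n\log 5$ dominates $\log M+2\log n+(d-1)\log(n+1)$ uniformly on the range $n>10d\log d$.
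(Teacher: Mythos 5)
Your proposal is, in substance, the paper's own proof reorganized. The ingredients are identical: normalize $P=\pm x^k f$ with $f(0)=1$ so that $f\in\cP$ (resp.\ $f\in\cP_1\cup\cP_2$ for $P\in\cQ$; your residue-class computation here is correct), then combine Garsia's bound (Theorem \ref{th:Garsia}), Solomyak's transversality (Theorem \ref{th:Solomyak}), and the mean value theorem with $|f'|\le n^2$, with exactly the same division of labour between the two radii: the inner radius $(5M)^{-n}$ defeats a possible zero of $P$ at $\xi$ via transversality, the outer radius $(5M)^{-n+1}$ makes the MVT perturbation negligible against Garsia's bound. The paper runs this as a contradiction (transversality first shows $P(\xi)\neq0$, then Garsia plus MVT contradicts $|P(\l)|\le(20M)^{-n}$), while you run it as a direct two-case estimate; that difference is cosmetic. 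In two places you are in fact more careful than the printed proof: you justify that the relevant segment stays inside the transversality interval via a Liouville-type separation of $\xi$ from $2^{-1/2}$ (resp.\ $2^{-2/3}$), and you note that the endpoint $\xi=2^{-2/3}$ can only fall into the Garsia case since $2^{-2/3}$ is not an algebraic unit; the paper passes over both points in silence.

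On the item you single out as ``the main thing to check'': the inequality $5^n\ge 10Mn^2(n+1)^{d-1}$ does \emph{not} hold uniformly over $n>10d\log d$ with $d$ above an absolute constant, because nothing in the hypotheses ties $\log M$ to $d$ or $n$; it fails outright when, say, $M\ge 6^n$. What your argument (and the paper's) actually proves is Proposition \ref{pr:separation} under the additional implicit requirement $n\ge C\log M$, i.e.\ with thresholds depending on $M$. This is not a defect of your write-up relative to the paper: the paper's final step needs $(5/2)^n>5Mn^2$ up to constants and carries exactly the same restriction, and the informal statement of this result in the introduction indeed lets all constants $C_1,C_2,C_3$ depend on $M$. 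In the application inside the proof of Theorem \ref{th:main} the restriction is harmless, since there $M$ is fixed while $d\to\infty$ and $n\ge d^2/\log(5M)$. So your proof matches the paper's; just do not claim uniformity in $M$ --- and, strictly speaking, the phrase ``absolute constant'' in the proposition should be read with the same grain of salt.
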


\begin{proof}
Suppose to the contrary that $|P(\l)|\le(20M)^{-n}$, where $\l$ is a number in the range specified by the proposition
and $P\neq0$ is a polynomial of degree at most $n$ with coefficients in $-1$, $0$ and  $1$ and $P\in\cQ$ if $\xi>2^{-2/3}$.

If $P(0)\neq 1$ we replace $P$ by $\pm P/x^k$ for a suitable $k$ such that $P(0)=1$ holds.
For this new $P$, we have $P\in\cP$ and moreover $P\in\cP_1\cup\cP_2$ if $\xi>2^{-2/3}$, and
we still have $|P(\l)|\le(10M)^{-n}$.
(Recall the definitions of $\cP$, $\cP_1$ and $\cP_2$ from Section \ref{sc:Solomyak}.)

Let $\d>0$ be a number such that $(0,2^{-2/3})$ is an interval of $\d$-transversality for $\cP$
and $(0,2^{-1/2})$ is an interval of $\d$-transversality for $\cP_1$ and $\cP_2$.
Such a number exists by Theorem \ref{th:Solomyak}.

We first show that $P(\xi)\neq 0$.
If this is not the case, then $P(\xi),P(\l)<\d$ (provided $n$ is large enough), so
we have $P'(t)<-\d$ for all $t$ between $\xi$ and $\lambda$.
Since $|\xi-\l|\ge (5M)^{-n}$, we have $|P(\xi)-P(\l)|>\d(5M)^{-n}$ by the mean value theorem,
a contradiction.

Now we can apply Theorem \ref{th:Garsia} and conclude that
\[
|P(\xi)|\ge (n+1)^{1-d}M(\xi)^{-n}\ge (2M)^{-n}
\]
since $n>10 d\log d$.
Since $|P(\l)|\le(10M)^{-n}$ and $|\l-\xi|\le(5M)^{-n+1}$, we get a contradiction with
$|P'(t)|\le n^2$, which holds for all $t\in[0,1]$.
\end{proof}

\begin{rmk}
After circulating a previous version of this paper, Vesselin Dimitrov pointed out to me that
a variant of Proposition \ref{pr:separation} follows from a result of Mignotte \cite{Mig-larger-degree}, which
gives an estimate for the distance between algebraic numbers if one of the degrees is much larger than the other.
In this version, one may relax the condition $\xi\in(1/2,2^{-2/3})$ at the expense of requiring
$n>d(\log d)^2$.
With this approach, one may avoid using the trimmed version of Bernoulli convolutions $\wt\nu_\l$
later in the proof.
\end{rmk}

We turn to the proof of Theorem \ref{th:main}.
Let $\l\in(1/2,1)$ be a transcendental number and assume to the contrary that $\dim\nu_\l<1$.
We show that $\dim\nu_{\l^k}\le\dim\nu_\l$ for any $k\in\Z_{>0}$.
Note that $\nu_\l=\nu_{\l^k}*\mu$, where $\mu$ is the law of the random variable $\sum_{n:k\nmid n}\pm\l^n$.
Now let $E$ be a Borel set with $\nu_\l(E)>0$.
Then for $t$ belonging to a set of positive $\mu$-measure, we have $\nu_{\l^k}(E-t)>0$, hence
$\dim_H(E)=\dim_H(E-t)\ge \dim\nu_{\l^k}$.
This proves $\dim\nu_\l\ge \dim\nu_{\l^k}$.
Therefore, we can assume $\l<2^{-1/2}$, for otherwise we can replace $\l$ by $\l^k$
for some positive integer $k$ so that $\l^k\in(1/2,2^{-1/2})$.

By Theorem \ref{th:BV2}, there is an arbitrarily large integer $d$, such that there is an algebraic number
$\xi$ of degree at most $d$ with $\dim\nu_\xi<1$ and $|\xi-\l|\le\exp(-d^2)$.
By Theorems \ref{th:Hochman3} and \ref{th:BV1}, there is a number $M$ depending only on $\l$ (but crucially not on $d$)
such that $M(\xi)\le M$.
Choose an integer $n$ such that
\[
(5M)^{-n}\le |\l-\xi|\le (5M)^{-n+1}.
\]

If $\xi\le2^{-2/3}$, we apply Theorem \ref{th:Hochman1} with $\theta=(20M)^{-1}$
to find a polynomial $P\neq0$ of degree at most $n$
with coefficients $-1$, $0$ and $1$
such that $|P(\l)|<(20M)^{-n}$.
Crucially, Theorem \ref{th:Hochman1} holds at every sufficiently large scale $n$.
Now, we have a contradiction with Proposition \ref{pr:separation}.

If $\xi>2^{-2/3}$, then we note that $\dim\wt\nu_\l\le\dim\nu_\l<1$, for $\nu_\l$
is the convolution of $\wt\nu_\l$ with another measure and convolution may only increase the dimension
of measures by the above argument.
Hence, we can apply Theorem \ref{th:Hochman2} and find that there is a polynomial $P\neq 0\in\cQ$
of degree at most $n$ such that $|P(\l)|<(20M)^{-n}$ and we reach a contradiction with
Proposition \ref{pr:separation} again.

\bibliography{bibfile}

\bigskip

\noindent{\sc Centre for Mathematical Sciences,
Wilberforce Road, Cambridge CB3 0WA,
UK}\\
{\em e-mail address:} pv270@dpmms.cam.ac.uk

\end{document}